\newtheorem{theorem}{Theorem}[section]
\newtheorem{corollary}[theorem]{Corollary}
\newtheorem{lemma}[theorem]{Lemma}
\newtheorem{example}[theorem]{Example}
\newtheorem{proposition}[theorem]{Proposition}
\newtheorem{remark}[theorem]{Remark}
\newtheorem{definition}[theorem]{Definition}
\numberwithin{equation}{section}
\begin{document}
	\makeatletter
	\makeatother
	\small \begin{center}{\textit{ In the name of
				Allah, the Beneficent, the Merciful.}}\end{center}\vspace{0.5cm}
	\large
	\begin{center}
		\textbf{AN OPEN SUBSET OF THE VARIETY OF $n$- DIMENSIONAL ALGEBRAS, CLASSIFICATION AND AUTOMORPHISMS}\\
		\textbf{Ural Bekbaev}\\
		\smallskip
		{Turin Polytechnic University in Tashkent, Tashkent, Uzbekistan;}\\
		{uralbekbaev@gmail.com}
	\end{center}
	\small \begin{center} Abstract\end{center}
	Classification, up to isomorphism, of algebras from a non-empty subset of the variety of  $n$- dimensional algebras is presented. It is shown that these algebras have only trivial automorphism and if the basic field is algebraically closed then it is an open dense subset of the variety of $n$- dimensional algebras. Properties of these sets, depending on $n$, with respect to the direct sum and tensor product are considered. Moreover one more way of construction of such subsets with similar properties is considered as well.\\
	Keyword: algebra, isomorphism, matrix of structure constants.\\
	MSC2010: 15A21, 15A63, 15A69, 17A45.
	\large
	
	\section{Introduction}  A classification, up to isomorphism, of algebraic structures is one of the core problems of algebra. In particular, classification of all algebra structures on $\mathbb{F}^n$, over an algebraically closed field $\mathbb{F}$ and natural $n$, is one of the such problems. It is believed that full solution of this problem for any $\mathbb{F}$ and $n$ is impossible. In the current paper we are going to show that if instead of all $n$-dimensional algebras one considers some dense subset of the variety of $n$-dimensional algebras then the classification problem can be solved positively. 
	
	In \cite{P} (Theorem 4) a theoretical existence of a non-empty open subset, consisting of simple algebras with trivial automorphism groups, of the variety of $n$-dimensional algebras is proved in $\mathbb{F}$ is algebraically closed, $Char(\mathbb{F})\neq 2$ case. The existence of normal(canonical) forms for algebras from that open set is shown as well(Theorem 2).   
	
	In \cite{BU1} we also have considered similar problem and offered an algorithm to construct a matrix $P(A)$, with a nice property, which helps classify, up to isomorphism, all algebras $A$ for which  $P(A)$ is invertible. Moreover  some results regarding invariants of algebras, under assumption of existence of such a $P(A)$, have been proved as well. Unfortunately, further explorations have shown that the algorithm offered does not work in that form for $n$, except $n=2$ case. 
	
	In $n=2$ case the classification problem is solved completely for the set of all algebras over any basic field $\mathbb{F}$ in \cite{BU2}. In \cite{ABR1, ABR2} it is shown how to get description of automorphism groups, derivation algebras, classification of different classes of two-dimensional algebras by the use of such classification result.
	
	In this paper we provide another algorithm for the construction of that matrix $P(A)$, show that it works well in arbitrary $n$ and $\mathbb{F}$ case, the set of algebras, for which $P(A)$ is invertible ($\mathcal{A}_{00}(n)$), is not empty if even $\mathbb{F}$ is not algebraically closed. We classify all algebras for which $P(A)$ is invertible, provide a formulae to find a canonical form of any such $A$, show that the automorphism group of $A$ is trivial and it does not have invertible derivation. Unlike \cite{P} we show that, if $\mathbb{A}$ is $n$-dimensional, $\mathbb{A'}$ is $n'$-dimensional algebras then $\mathbb{A}\oplus \mathbb{A'}\in\mathcal{A}_{00}(n+n')$ if and only if $\mathbb{A}\in\mathcal{A}_{00}(n)$ and $\mathbb{A'}\in\mathcal{A}_{00}(n')$. Moreover, it is shown that $\mathbb{A}\otimes \mathbb{A'}\in\mathcal{A}_{00}(nn')$ if and only if $\mathbb{A}\in\mathcal{A}_{00}(n)$ and $\mathbb{A'}\in\mathcal{A}_{00}(n')$. A description of elements of $\mathbb{A}\in\mathcal{A}_{00}(2)$, up to isomorphism, is given as well.
	
	It should be noted that there are many investigations dealing with classification of different types of $n$-dimensional PI-algebras, sometimes with some constraints, in the case of small values of $n$. There are investigations dealing with such problem for any value of $n$, for example, simple, semi-simple associative, Lie algebras and et cetera \cite{N,Bl}. But from topological point of view, in contrast to our case, they are small subsets of the variety of $n$-dimensional algebras, though those algebras are important in applications. Here 'small' subset means that its complement is a dense subset of the variety.  
	
	The paper is organised in the following way. In the next section some needed notions, notations and results are presented. Then the main result is described in another section. In the last section another way of construction of such subsets with similar properties is considered.

	\section{Preliminaries}
	Consider the following symmetric $n$-order square tridiagonal 
	matrix\\
	$C=\begin{pmatrix}	1&-1&0&0&...&0&0\\
		-1&2&-1&0&...&0&0\\
		0&-1&2&-1&...&0&0\\
		0&0&-1&2&...&0&0\\
		.&.&.&.&...&.&.\\
		.&.&.&.&...&2&-1\\
		0&0&0&0&...&-1&2\end{pmatrix}$ over a commutative, associative ring $R$ with the unit element $1$, its powers' diagonal elements $(C^k)_{ii}$ and construct square matrix $V=V(C)$ who's rows are $((C^k)_{11}, (C^k)_{22},...,(C^k)_{nn})$, where $k=0,1,2,...,n-1$ and it is assumed that $((C^0)_{11}, (C^0)_{22},...,(C^0)_{nn})=(1,1,1,...,1)$, that is $C^0$ is the identity matrix. 
	\begin{lemma} The equality $\det(V(C))=\pm 1$, depending on $n$, is true.	\end{lemma}
	\begin{proof} To prove one can use induction on $n$ in the following way. For small values of $n$ the lemma can be checked easily. The entries of $V$ have the following properties: 
		$$V_{2i,i+1}=...=V_{2i,n-(i-1)}, \ \mbox{whenever}\ 2i\leq n, V_{2i+1,i+1}=...=V_{2i+1,n-i}, \ \mbox{whenever}\ 2i\leq n-1,$$
		$$V_{n,k}-V_{n,k+1}=1, \ \mbox{if}\ n=2k-1, V_{n,k+1}-V_{n,k}=1, \ \mbox{if}\ n=2k.$$ Therefore the difference of $(k+1)_{th}$ and $k_{th}$ columns, in corresponding order, is $(0,0,...,0,1)$, which implies that the lemma is true.   
	\end{proof}
	
	Further the following notations are used.
	If $\mathbb{F}$ is any field, $A=(a_{ij})$, $B=(b_{kl})$ are rectangular matrices over it then $A\otimes B$ stands for the tensor product $A\otimes B=(a_{ij}B)$. Let $n$, $m$  be any natural numbers, $M(\mathbf{a}_1|\mathbf{a}_2|...|\mathbf{a}_n)=(\mathbf{a}_1/\mathbf{a}_2/.../\mathbf{a}_n)$ stand for the matrix consisting of the same size rows $\mathbf{a}_1,\mathbf{a}_2,...,\mathbf{a}_n\in \mathbb{F}^m$, where $(\mathbf{a}_1|\mathbf{a}_2|...|\mathbf{a}_n)$ stands for row matrix with 'components' $\mathbf{a}_1,\mathbf{a}_2,...,\mathbf{a}_n$. If $A_1,A_2,...,A_n$ are same size square matrices then $A=(A_1|A_2|...|A_n)$ stands for the corresponding row-block matrix, and $\overline{tr}(A)=(tr(A_1),tr(A_2),...,tr(A_n)),$ where $tr(A_i)$ means the ordinary trace of the matrix $A_i$, $I$ stands for $n$-order unit matrix, $I_m$ for $m$-order unit matrix.
	
	\begin{proposition}The equalities $$(\mathbf{b}_1|\mathbf{b}_2|...|\mathbf{b}_n)=(\mathbf{a}_1|\mathbf{a}_2|...|\mathbf{a}_n)(g\otimes h) \ \mbox{and}\ (\mathbf{b}_1/\mathbf{b}_2/.../\mathbf{b}_n)=g^t(\mathbf{a}_1/\mathbf{a}_2/.../\mathbf{a}_n)h$$ are equivalent, where 	$\mathbf{a}_i\in \mathbb{F}^m,\mathbf{b}_j\in \mathbb{F}^m$ are row vectors, $g\in Mat(n\times k,\mathbb{F})$, $h\in Mat(m\times m,\mathbb{F})$.
	\end{proposition}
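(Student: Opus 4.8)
The plan is to verify the claimed equivalence by a single blockwise computation, showing that each of the two displayed identities is equivalent to the same system of vector equalities, one for each $\mathbf{b}_j$. Since the two sides are then literally the same system, their equivalence follows at once.

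First I would fix notation and dimensions. Write $g=(g_{ij})$ with $1\le i\le n$, $1\le j\le k$, set $A=(\mathbf{a}_1/\mathbf{a}_2/.../\mathbf{a}_n)$ for the $n\times m$ matrix whose $i$-th row is $\mathbf{a}_i$, and let $B$ be the matrix whose $j$-th row is $\mathbf{b}_j$ (so $j$ ranges over the $k$ columns of $g$, which is what makes both sides dimensionally consistent). The essential idea is to read both products blockwise. I would regard the long row $(\mathbf{a}_1|\mathbf{a}_2|...|\mathbf{a}_n)$ as a row partitioned into $n$ consecutive blocks of length $m$, the $i$-th block being $\mathbf{a}_i$, and I would regard $g\otimes h=(g_{ij}h)$ as a block matrix whose $(i,j)$-block is the $m\times m$ matrix $g_{ij}h$.

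Then I would compute the $j$-th length-$m$ block of the product $(\mathbf{a}_1|\mathbf{a}_2|...|\mathbf{a}_n)(g\otimes h)$. By block matrix multiplication this block equals $\sum_{i=1}^{n}\mathbf{a}_i(g_{ij}h)=\sum_{i=1}^{n}g_{ij}\,\mathbf{a}_i h$, the scalars $g_{ij}$ pulling out. Hence the first identity is equivalent to the system $\mathbf{b}_j=\sum_{i=1}^{n}g_{ij}\,\mathbf{a}_i h$ for every $j$. On the other side, the $j$-th row of $g^t A h$ is $\big((g^t A)h\big)_{j,\cdot}=(g^t A)_{j,\cdot}\,h=\big(\sum_{i=1}^{n}(g^t)_{ji}\,\mathbf{a}_i\big)h=\sum_{i=1}^{n}g_{ij}\,\mathbf{a}_i h$, where I use $(g^t)_{ji}=g_{ij}$ and the fact that right multiplication by $h$ acts row by row. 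Thus the second identity is equivalent to the very same system $\mathbf{b}_j=\sum_{i=1}^{n}g_{ij}\,\mathbf{a}_i h$, and the two displayed equations are therefore equivalent.

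The computation is routine, so I do not expect a genuine obstacle; the only place demanding care is reconciling the two conventions. One must correctly identify the $i$-th block of the long row $(\mathbf{a}_1|...|\mathbf{a}_n)$ with the $i$-th row of the stacked matrix $A$, and one must transcribe the block structure of $g\otimes h$ together with the transpose in $g^t$ so that the shared scalar coefficient emerges as exactly $g_{ij}$ in both computations. Beyond this bookkeeping the equivalence is immediate.
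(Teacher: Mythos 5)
Your proof is correct and is essentially the same computation as the paper's: both identify the $j$-th block of $(\mathbf{a}_1|...|\mathbf{a}_n)(g\otimes h)$ and the $j$-th row of $g^t(\mathbf{a}_1/.../\mathbf{a}_n)h$ with $\sum_{i}g_{ij}\,\mathbf{a}_i h$. You simply spell out the blockwise bookkeeping in more detail than the paper's one-line verification.
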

	
	\begin{proof}Indeed, $\mathbf{b}_i=\sum_{k=1}^n \mathbf{a}_kg_{ki}h=\sum_{k=1}^n g_{ki}\mathbf{a}_kh=(g^tA)_ih$, where  $(g^tA)_i$ stands for the $i^{th}$ row of matrix $g^tA$.\end{proof}
	
	\begin{proposition} If $A'= gA(h\otimes g^{-1})$, where $A=(A_1|A_2|...|A_n)$, $g,A_i\in Mat(m,\mathbb{F})$,\\ $h\in Mat(n\times k,\mathbb{F})$ then
		$$\overline{tr}(A')=(tr(A'_1), tr(A'_2),...,tr(A'_k))=\overline{tr}(A)h,\ \mbox{more generally, for any natural\ k },$$ $$\overline{tr}((A')^{[k]})=\overline{tr}(A^{[k]})h^{\otimes k}$$ is true, where by definition $$A^{[1]}=A=(A_1|A_2|...|A_n),\ \ A^{ [k]}=(A_1A^{ [k-1]}|A_2A^{[k-1]}|...|A_nA^{[k-1]}).$$
	\end{proposition}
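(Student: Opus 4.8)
The plan is to reduce everything to two elementary facts about the trace and conjugation: the conjugation invariance $tr(gXg^{-1})=tr(X)$, and the telescoping identity $(gXg^{-1})(gYg^{-1})=gXYg^{-1}$. Throughout I will write $p$ for the exponent appearing in the ``more generally'' part, keeping it distinct from the width $k$ of $h$.

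First I would unwind the block structure of $A'$. Since $h\otimes g^{-1}$ has $(i,j)$-block equal to $h_{ij}g^{-1}$ for $1\le i\le n$, $1\le j\le k$, block multiplication shows that the $j$-th block of $A'=gA(h\otimes g^{-1})$ is
$$A'_j=\sum_{i=1}^n h_{ij}\,gA_ig^{-1},\qquad j=1,\dots,k.$$
Taking traces and using $tr(gA_ig^{-1})=tr(A_i)$ together with linearity gives $tr(A'_j)=\sum_{i=1}^n tr(A_i)h_{ij}$, which is precisely the $j$-th entry of the row vector $\overline{tr}(A)h$. This establishes the first equality, i.e. the case $p=1$.

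For the general statement I would first record the recursion $A^{[p]}=A(I_n\otimes A^{[p-1]})$ (and likewise $(A')^{[p]}=A'(I_k\otimes (A')^{[p-1]})$), which is just a restatement of the defining block formula for $A^{[p]}$. From here the most transparent route is to note that the blocks of $(A')^{[p]}$ are indexed by $p$-tuples $(j_1,\dots,j_p)$ and equal the products $A'_{j_1}\cdots A'_{j_p}$; substituting the formula for each $A'_{j_l}$ and telescoping the interior factors $g^{-1}g=I$ yields
$$A'_{j_1}\cdots A'_{j_p}=\sum_{i_1,\dots,i_p} h_{i_1j_1}\cdots h_{i_pj_p}\; g\,A_{i_1}\cdots A_{i_p}\,g^{-1}.$$
Taking traces, invoking conjugation invariance once more, and observing that $(h^{\otimes p})_{(i_1\dots i_p),(j_1\dots j_p)}=h_{i_1j_1}\cdots h_{i_pj_p}$ while the $(i_1,\dots,i_p)$-block of $A^{[p]}$ is $A_{i_1}\cdots A_{i_p}$, one reads off $\overline{tr}((A')^{[p]})=\overline{tr}(A^{[p]})h^{\otimes p}$.

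The cleaner variant, which I would likely present, is to prove by induction on $p$ the lifted matrix identity
$$(A')^{[p]}=g\,A^{[p]}\,(h^{\otimes p}\otimes g^{-1}),$$
whose base case is the hypothesis $A'=gA(h\otimes g^{-1})$. The inductive step uses $(A')^{[p]}=A'(I_k\otimes (A')^{[p-1]})$, the mixed-product rule $(X\otimes Y)(Z\otimes W)=(XZ)\otimes(YW)$, the cancellation $g^{-1}g=I$, and the fact that the tensor powers assemble correctly via $h^{\otimes p}=h\otimes h^{\otimes(p-1)}$ together with $A^{[p]}=A(I_n\otimes A^{[p-1]})$. Once this identity is in hand, applying the already-proved first part with $A^{[p]}$ and $h^{\otimes p}$ in place of $A$ and $h$ gives the conclusion at once. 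The only genuine effort is the index bookkeeping in these tensor/block manipulations; all the mathematical content is carried by conjugation invariance of the trace and the cancellation of the conjugating factors, so I expect no real obstacle beyond keeping the multi-indices straight.
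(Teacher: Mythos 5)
Your proposal is correct and follows essentially the same route as the paper: unwind the blocks of $A'$ as $A'_j=\sum_i h_{ij}gA_ig^{-1}$, telescope the conjugating factors in the products $A'_{j_1}\cdots A'_{j_p}$, and take traces using conjugation invariance. The ``lifted'' identity $(A')^{[p]}=gA^{[p]}(h^{\otimes p}\otimes g^{-1})$ you offer as a variant is a clean matrix-level repackaging of the same computation, but not a different argument.
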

	
	\begin{proof} The equality $A'= gA(h\otimes g^{-1})$ implies that $A'=(A'_1|A'_2|...|A'_m)$ and $A'_i=\sum_{p=1}^n h_{pi}gA_pg^{-1}$ For all $i=1,2,...,m$. Therefore $tr(A'_i)=\sum_{p=1}^n h_{pi}tr(A_p)$, that is $\overline{tr}(A')=\overline{tr}(A)h.$ Similarly, 
		$A'_{i_1}A'_{i_2}...A'_{i_{k}}=\sum_{p_1,p_2,...,p_{k}=1}^n h_{p_1i_1}h_{p_2i_2}...h_{p_ki_k}gA_{p_1}A_{p_2}...A_{j_{k}}g^{-1}$ implies  $$tr(B_{i_1}B_{i_2}...B_{i_{k}})=\sum_{p_1,p_2,...,p_{k}=1}^nh_{p_1i_1}h_{p_2i_2}...h_{p_ki_k}tr(A_{p_1}A_{p_2}...A_{p_{k}})$$ for all $1\leq i_j\leq m$. By introduction of notation  $\overline{tr}((A')^{[k]})=\overline{tr}(A^{[k]})h^{\otimes k}$, where  $A^{[1]}=A=(A_1|A_2|...|A_n),\ \ A^{ [k]}=(A_1A^{ [k-1]}|A_2A^{[k-1]}|...|A_nA^{[k-1]})$ one can represent all those equalities
		in the following $\overline{tr}((A')^{[k]})=\overline{tr}(A^{[k]})h^{\otimes k}$  matrix form.\end{proof}

	Let $G$ be a group, $V$ be a set $\tau: (G,V)\rightarrow V$, be a representation of $G$. Note that elements $v,w\in V$ are said to be $G$-equivalent, with respect to the representation $\tau$, if $w=\tau(g,v)$ for some $g\in G$. 
	
	{\bf Assumption.} There exists a non-empty $G$-invariant subset $V_0$ of $V$ and a map $P: V_0\rightarrow G$ such that
	$P(\tau( g,\mathbf{v}))=P(\mathbf{v})g^{-1}$ whenever  $\mathbf{v}\in V_0$ and $g\in G$.
	
	\begin{lemma} Elements $\mathbf{u},\mathbf{v}\in V_0$ are $G$-equivalent, that is $\mathbf{u}=\tau( g,\mathbf{v})$ for some $g\in G$, if and only if $\tau( P(\mathbf{u}),\mathbf{u})=\tau( P(\mathbf{v}),\mathbf{v})$.\end{lemma}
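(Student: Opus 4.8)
The plan is to introduce the \emph{canonical form} map $\Phi\colon V_0\to V$ defined by $\Phi(\mathbf{v})=\tau(P(\mathbf{v}),\mathbf{v})$, so that the asserted criterion reads simply $\Phi(\mathbf{u})=\Phi(\mathbf{v})$. The whole lemma will then reduce to the single fact that $\Phi$ is constant on $G$-orbits inside $V_0$, i.e. $G$-invariant.

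First I would establish this invariance. For $\mathbf{v}\in V_0$ and $g\in G$, the $G$-invariance of $V_0$ guarantees $\tau(g,\mathbf{v})\in V_0$, so $P(\tau(g,\mathbf{v}))$ is defined and, by the Assumption, equals $P(\mathbf{v})g^{-1}$. Substituting and using that $\tau$ is an action (so $\tau(a,\tau(b,\mathbf{w}))=\tau(ab,\mathbf{w})$) gives
$$\Phi(\tau(g,\mathbf{v}))=\tau\bigl(P(\mathbf{v})g^{-1},\tau(g,\mathbf{v})\bigr)=\tau\bigl(P(\mathbf{v})g^{-1}g,\mathbf{v}\bigr)=\tau(P(\mathbf{v}),\mathbf{v})=\Phi(\mathbf{v}).$$
This immediately yields the forward implication: if $\mathbf{u}=\tau(g,\mathbf{v})$ then $\Phi(\mathbf{u})=\Phi(\tau(g,\mathbf{v}))=\Phi(\mathbf{v})$, which is exactly $\tau(P(\mathbf{u}),\mathbf{u})=\tau(P(\mathbf{v}),\mathbf{v})$.

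For the converse I would exploit that each $\mathbf{w}\in V_0$ is recovered from its canonical form by acting with $P(\mathbf{w})^{-1}$: indeed $\tau(P(\mathbf{w})^{-1},\Phi(\mathbf{w}))=\tau(P(\mathbf{w})^{-1}P(\mathbf{w}),\mathbf{w})=\mathbf{w}$. Assuming $\Phi(\mathbf{u})=\Phi(\mathbf{v})$, I substitute $\Phi(\mathbf{u})=\Phi(\mathbf{v})=\tau(P(\mathbf{v}),\mathbf{v})$ into this recovery formula for $\mathbf{u}$ and collapse the composition:
$$\mathbf{u}=\tau\bigl(P(\mathbf{u})^{-1},\Phi(\mathbf{u})\bigr)=\tau\bigl(P(\mathbf{u})^{-1},\tau(P(\mathbf{v}),\mathbf{v})\bigr)=\tau\bigl(P(\mathbf{u})^{-1}P(\mathbf{v}),\mathbf{v}\bigr),$$
so $\mathbf{u}=\tau(g,\mathbf{v})$ with $g=P(\mathbf{u})^{-1}P(\mathbf{v})\in G$, proving $G$-equivalence.

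I do not expect any serious obstacle: the argument is purely formal and rests only on the action axioms together with the defining property of $P$. The one point to handle with care is the bookkeeping of the order of group multiplication in $\tau(a,\tau(b,\cdot))=\tau(ab,\cdot)$, so that the factors $g^{-1}g$ and $P(\mathbf{w})^{-1}P(\mathbf{w})$ actually cancel; and one must invoke the $G$-invariance of $V_0$ explicitly to be sure $P$ is evaluated only where it is defined.
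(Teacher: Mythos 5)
Your proof is correct and is essentially the paper's own argument: both directions use exactly the same computation, including the choice $g=P(\mathbf{u})^{-1}P(\mathbf{v})$ for the converse; the only cosmetic difference is that you name the canonical-form map $\Phi$ and phrase the forward direction as $G$-invariance of $\Phi$. No gaps.
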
 
	
	\begin{proof}  If $\mathbf{u}=\tau( g,\mathbf{v})$ then $\tau( P(\mathbf{u}),\mathbf{u})=\tau( P(\tau( g,\mathbf{v})),\tau( g,\mathbf{v}))=$ \[\tau( P(\mathbf{v}) g^{-1},\tau( g,\mathbf{v}))=\tau( P(\mathbf{v}),\tau( g^{-1},\tau( g,\mathbf{v})))=\tau( P(\mathbf{v}),\mathbf{v}).\]
		Visa versa, if $\tau( P(\mathbf{u}),\mathbf{u})=\tau( P(\mathbf{v}),\mathbf{v})$ then \[ \tau( P(\mathbf{u})^{-1}P(\mathbf{v}),\mathbf{v})=\tau(( P(\mathbf{u}))^{-1},\tau( P(\mathbf{v}),\mathbf{v}))= \tau(( P(\mathbf{u}))^{-1},\tau( P(\mathbf{u}),\mathbf{u}))=\mathbf{u}\] that is $\mathbf{u}=\tau( g,\mathbf{v})$, where  $g=P(\mathbf{u})^{-1}P(\mathbf{v})\in G$.\end{proof}

	\section{The main result}
	Let $\mathbb{A}$ be an $n$-dimensional algebra over $\mathbb{F}$ and $\mathbf{e}=\{\mathrm{e}_1,\mathrm{e}_2,...,\mathrm{e}_n\}$ be a fixed basis for $\mathbb{A}$ as a vector space over $\mathbb{F}$. One can attach to the algebra $\mathbb{A}$  a $n \times n^2$-size matrix (called the matrix of structure constants, shortly MSC) $$A=\left(\begin{array}{ccccccccccccc}a_{11}^1&a_{12}^1&...&a_{1n}^1&a_{21}^1&a_{22}^1&...&a_{2n}^1&...&a_{n1}^1&a_{n2}^1&...&a_{nn}^1\\ a_{11}^2&a_{12}^2&...&a_{1n}^2&a_{21}^2&a_{22}^2&...&a_{2n}^2&...&a_{n1}^2&a_{n2}^2&...&a_{nn}^2 \\
		...&...&...&...&...&...&...&...&...&...&...&...&...\\ a_{11}^n&a_{12}^n&...&a_{1n}^n&a_{21}^n&a_{22}^n&...&a_{2n}^n&...&a_{n1}^n&a_{n2}^n&...&a_{nn}^n\end{array}\right)=(A_1|A_2|...|A_n)$$ as follows
	$$\mathrm{e}_i \cdot \mathrm{e}_j=\sum\limits_{k=1}^n a_{ij}^k\mathrm{e}_k, \ \mbox{where}\ i,j=1,2,...,n,\ \mbox{ in other words}\ e_i\cdot \mathbf{e}=\mathbf{e}A_i, i=1,2,...,n,$$ where $A_i=(a_{ij}^k)_{k,j=1,2,...,n}$. 
	
	In reality there is one more (opposite) way of attaching such a matrix to $\mathbb{A}$, namely, let $\mathbf{e}\cdot e_k=\mathbf{e}A^o_{k}, k=1,2,...,n,$ and $A^o=(A^o_1|A^o_2|...|A^o_n)$, where, in above notations, $A^o_{k}=(a_{jk}^i)_{i,j=1,2,...,n}$. so
	$$e_i\cdot \mathbf{e}=\mathbf{e}A_i,\ \ \mathbf{e}\cdot e_i=\mathbf{e}A^o_{i}\ \ i=1,2,...,n.$$ 
	The product operation of $\mathbb{A}$ with respect to this basis is written as follows
	\begin{equation} \label{Product}
		\mathrm{x}\cdot \mathrm{y}=\mathbf{e} A(x\otimes y),\ \mbox{as well as}\ \mathrm{x}\cdot \mathrm{y}=\mathbf{e} A^o(y\otimes x) 
	\end{equation}  
	for any $\mathbf{x}=\mathbf{e}x,\mathbf{y}=\mathbf{e}y,$
	where $x=(x_1, x_2,...,x_n)\in \mathbb{F}^n,$  $y=(y_1, y_2,...,y_n)\in \mathbb{F}^n$ are column coordinate vectors of $\mathbf{x}$ and $\mathbf{y},$ respectively, $x\otimes y$ is the tensor(Kronecker)
	product of the vectors $x$ and $y$. The behaviour of $A$ and $A^o$ with respect to a basis change  looks like $$\tau(g,A)=gA(g^{-1})^{\otimes 2}, \ \ \tau(g,A^o)=gA^o(g^{-1})^{\otimes 2}$$, where $g\in GL(n, \mathbb{F})$. Therefore further we look at $\mathcal{A}=\mathcal{A}(n)=Mat(n\times n^2, \mathbb{F})$-the variety of $n$-dimensional algebras only with respect to this action of $G=GL(n, \mathbb{F})$. 
	
	Further it is assumed that a basis $\mathbf{e}$ is fixed and we do not make difference between an algebra $\mathbb{A}$ and its MSC $A$, between a linear map and its matrix with respect to this basis.
	
	One can consider the following left and right invariant, symmetric Killing forms over $\mathbb{A}$: $$\mathbf{B}(\mathbf{x},\mathbf{y})=tr(ad^l_{\mathbf{x}}\circ ad^l_{\mathbf{y}})=tr(ad^l_{\mathbf{y}}\circ ad^l_{\mathbf{x}}), \ \mbox{where}\ ad^l_{\mathbf{x}}: \mathbb{A}\rightarrow \mathbb{A},\ ad^l_{\mathbf{x}}(\mathbf{z})=\mathbf{x}\mathbf{z},$$
	$$\mathbf{B}^o(\mathbf{x},\mathbf{y})=tr(ad^r_{\mathbf{x}}\circ ad^r_{\mathbf{y}})=tr(ad^r_{\mathbf{y}}\circ ad^r_{\mathbf{x}}),
	\ \mbox{where}\ ad^r_{\mathbf{x}}: \mathbb{A}\rightarrow \mathbb{A},\ ad^r_{\mathbf{x}}(\mathbf{z})=\mathbf{z}\mathbf{x}.$$
	
	Note that due to the equalities $$\mathbf{B}(e_i,e_j)=tr(ad^l_{e_i}\circ ad^l_{e_j})=tr(A_iA_j),\ \ \mathbf{B}^o(e_i,e_j)=tr(ad^r_{e_i}\circ ad^r_{e_j})=tr(A^o_{i}A^o_{j})$$ one has $$\mathbf{B}(\mathbf{x},\mathbf{y})=x^tB(A)y,\ \ \mathbf{B}^o(\mathbf{x},\mathbf{y})=x^tB^o(A)y,$$ where $B(A)=(tr(A_iA_j))_{i,j}, B^o(A)=(tr(A^o_iA^o_j))_{i,j}.$ Therefore, due to the Propositions 2.1,2.2, the following equalities 
	
	$$B(A')=(g^{-1})^tB(A)g^{-1}, B^o(A')=(g^{-1})^tB^o(A)g^{-1}$$ whenever $A'=gA(g^{-1})^{\otimes 2},\ g\in GL(n,\mathbb{F})$, are true. 
	
	Consider $\mathcal{A}_0=\mathcal{A}_0(n)=\{A\in\mathcal{A} :\  \det(B(A))\neq 0
	\}$-an invariant subset of $\mathcal{A}$.
		
	
	If $A\in\mathcal{A}_0$ and $A'=gA(g^{-1})^{\otimes 2},\ g\in GL(n,\mathbb{F})$ then $B(A')^{-1}=gB(A)^{-1}g^t, B(A')^{-1}B^o(A')=gB(A)^{-1}B^o(A)g^{-1}$ and $(B(A')^{-1}B^o(A'))^k=g(B(A)^{-1}B^o(A))^kg^{-1}$. Therefore one has the following important equalities  
	$$(B(A')^{-1}B^o(A'))^kA'=g(B(A)^{-1}B^o(A))^kA(g^{-1})^{\otimes 2},$$ $$\overline{tr}((B(A')^{-1}B^o(A'))^kA')=\overline{tr}((B(A)^{-1}B^o(A))^kA)g^{-1},$$ where $k$ is any non-negative integer.	
	So for the matrix $$P(A)=(\overline{tr}((B^{-1}(A)B^o(A))^0A)/\overline{tr}((B^{-1}(A)B^o(A))A)/.../\overline{tr}((B^{-1}(A)B^o(A))^{n-1}A))$$ the equality $$P(A')=P(A)g^{-1}$$ is true, where $(B^{-1}(A)B^o(A))^0=I$.
	
	\begin{example} Let $n=1, A=(a)$. In this case $A=A^o$, $B(A)=B^o(A)=(a^2)$, \\ 
		$\mathcal{A}_0=\{A=(a):\ 0\neq a\in\mathbb{F}\}, P(A)=(a)$..\end{example}
	\begin{example} Let $n=2, A=(A_1|A_2)=\begin{pmatrix}1&0&0&0\\
			0&1&0&1\end{pmatrix}$. In this case\\ $A^o=(A^o_1|A^o_2)=\begin{pmatrix}1&0&0&0\\
			0&0&1&1\end{pmatrix}$, $B(A)=\begin{pmatrix}2&1\\
			1&1\end{pmatrix}, B^o(A)=\begin{pmatrix}1&0\\
			0&1\end{pmatrix}$,\\ $C=B^{-1}(A)B^o(A)=\begin{pmatrix}1&-1\\
			-1&2\end{pmatrix}$, $\overline{tr}(CA)=\begin{pmatrix}3&2\end{pmatrix}$ and $P(A)=\begin{pmatrix}2&1\\
			3&2\end{pmatrix}, A\in   
		\mathcal{A}_{0}$.\end{example}
	\begin{example} Let $n=3, A=(A_1|A_2|A_3)=\begin{pmatrix}1&0&0&0&0&0&0&0&0\\
			0&1&0&0&1&0&0&0&0\\
			0&0&1&0&0&1&0&0&1\end{pmatrix}$. In this case $A^o=(A^o_1|A^o_2|A^o_3)=\begin{pmatrix}1&0&0&0&0&0&0&0&0\\
			0&0&0&1&1&0&0&0&0\\
			0&0&0&0&0&0&1&1&1\end{pmatrix}$, $B(A)=\begin{pmatrix}3&2&1\\ 2&2&1\\
			1&1&1\end{pmatrix}$, $B^o(A)=\begin{pmatrix}1&0&0\\ 0&1&0\\
			0&0&1\end{pmatrix}$, $C=B^{-1}(A)B^o(A)=\begin{pmatrix}	1&-1&0\\
			-1&2&-1\\
			0&-1&2&\end{pmatrix}$, $\overline{tr}(CA)=\begin{pmatrix}5&4&2\end{pmatrix}$, $\overline{tr}(C^2A)=\begin{pmatrix}13&11&5\end{pmatrix}$, and  $P(A)=\begin{pmatrix}3&2&1\\ 5&4&2\\
			13&11&5\end{pmatrix}, A\in   
		\mathcal{A}_{0}$. \end{example}
	\begin{lemma} The set $\mathcal{A}_{00}=\{A\in\mathcal{A}_0 :\ \det(P(A))\neq 0\}$ is not empty .
		
	\end{lemma}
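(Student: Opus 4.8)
The plan is to reduce the existence of a good $A$ to a single non-vanishing condition in one parameter, and then to use the hypothesis $Card(\mathbb{F})>2(n-1)$ to produce a value of that parameter avoiding the boundedly many bad ones. First I would reformulate $\det(P(A))\neq 0$ in intrinsic terms. Writing $M=B^{-1}(A)B^o(A)$ and $T(X)=\overline{tr}(XA)=(tr(XA_1),\dots,tr(XA_n))$ for $X\in Mat(n,\mathbb{F})$, the rows of $P(A)$ are exactly $T(I),T(M),\dots,T(M^{n-1})$, so $\det(P(A))\neq 0$ if and only if these $n$ vectors of $\mathbb{F}^n$ are linearly independent. A convenient sufficient condition is that $M$ have $n$ pairwise distinct eigenvalues, so that $I,M,\dots,M^{n-1}$ are linearly independent in $Mat(n,\mathbb{F})$ and span the commutative algebra $\mathbb{F}[M]$, together with injectivity of $T$ on $\mathbb{F}[M]$ (equivalently, $\mathbb{F}[M]$ meets the trace-orthogonal complement of $\mathrm{span}(A_1,\dots,A_n)$ only in $0$).

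Next I would exhibit an explicit one-parameter family $A(t)\in\mathcal{A}$, $t\in\mathbb{F}$, generalizing Example 3.2, chosen so that along the family (i) $\det(B(A(t)))$ is a nonzero polynomial in $t$, so that $A(t)\in\mathcal{A}_0$ for all but finitely many $t$; (ii) the blocks $A_1(t),\dots,A_n(t)$ stay linearly independent, so $T$ has full rank and its restriction to $\mathbb{F}[M(t)]$ is injective whenever $M(t)$ is non-derogatory; and (iii) the characteristic polynomial $\chi_{M(t)}$ is under control, most cleanly with $n-1$ fixed, pairwise distinct eigenvalues and one eigenvalue depending linearly on $t$. Conditions (i)--(ii) would be checked by the direct formulas $B(A)=(tr(A_iA_j))$ and $B^o(A)=(tr(A^o_iA^o_j))$, and the $n-1$ distinct constants needed in (iii) exist since $Card(\mathbb{F})>2(n-1)\geq n-1$.

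Granting such a family, the parameters $t$ for which $A(t)$ fails to be good lie in the zero set of the product of $\det(B(A(t)))$, the discriminant $\mathrm{disc}(\chi_{M(t)})=\prod_{i<j}(\mu_i-\mu_j)^2$, and the determinant controlling injectivity of $T$ on $\mathbb{F}[M(t)]$. The decisive factor is the discriminant: the discriminant of a monic degree-$n$ polynomial has total degree $2(n-1)$ in its coefficients, and with one eigenvalue moving linearly it becomes a polynomial in $t$ of degree $2(n-1)$ that is not identically zero (its roots being the double points $t=\mu_j$). Hence the whole obstruction is a nonzero polynomial in $t$ of degree at most $2(n-1)$ and so has at most $2(n-1)$ roots; since $Card(\mathbb{F})>2(n-1)$, some $t\in\mathbb{F}$ avoids all of them, and this $A(t)$ then satisfies $\det(B(A(t)))\neq 0$ and $\det(P(A(t)))\neq 0$.

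\emph{Main obstacle.} The heart of the argument is the explicit construction in the second step: one must produce a family for which $B(A(t))$ stays invertible, the $A_j(t)$ stay independent, and simultaneously $\chi_{M(t)}$ acquires $n$ distinct eigenvalues for suitable $t$, all while keeping the combined obstruction degree at most $2(n-1)$. Guaranteeing a simple spectrum of $M=B^{-1}(A)B^o(A)$ is the delicate point, and it is exactly the degree $2(n-1)$ of the discriminant that dictates the cardinality hypothesis; verifying that the discriminant does not vanish identically along the chosen family, rather than merely for generic abstract matrices, is where the concrete form of $A(t)$ must be pinned down.
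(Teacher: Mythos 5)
There is a genuine gap: your argument is conditional on an explicit one-parameter family $A(t)$ satisfying conditions (i)--(iii), and that family is never produced. You correctly identify this as the ``main obstacle,'' but the construction \emph{is} the content of the lemma -- without it nothing has been proved. Moreover, even granting such a family, your degree accounting does not close. You describe the obstruction as the product of three polynomials in $t$ (namely $\det(B(A(t)))$, the discriminant of $\chi_{M(t)}$, and the determinant controlling injectivity of $T$ on $\mathbb{F}[M(t)]$), but you bound only the discriminant factor by $2(n-1)$; unless the other two factors are constant in $t$, the product has degree exceeding $2(n-1)$ and the hypothesis $Card(\mathbb{F})>2(n-1)$ no longer suffices. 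Finally, your sufficient condition (simple spectrum of $M$ plus injectivity of $T$ on $\mathbb{F}[M]$) is strictly stronger than what is needed and is the hardest thing to arrange; requiring $n$ distinct eigenvalues over a field that may have barely more than $2(n-1)$ elements is itself a nontrivial constraint you do not address.

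The paper takes a different and more economical route that sidesteps the spectrum of $M$ entirely: induction on $n$, with the $n=2$ case settled by Example 3.2. Given an $(n-1)$-dimensional $A''$ with $\det(P(A''))\neq 0$, one forms the $n$-dimensional algebra with $A_1=\bigl(\begin{smallmatrix}1&0\\0&0\end{smallmatrix}\bigr)$ and $A_i=\bigl(\begin{smallmatrix}0&0\\ \mathbf{x}_i&A_i''\end{smallmatrix}\bigr)$, choosing the $\mathbf{x}_i$ (and a scaling parameter $t$) so that $B(A)$ and $B^o(A)$ are block diagonal; then $B(A)^{-1}B^o(A)=\mathrm{diag}(1+ct^2,\,B(A'')^{-1}B^o(A''))$ with $c\neq 0$, and the $k$-th row of $P(A)$ is $\bigl((1+ct^2)^k,\ \overline{tr}((B(A'')^{-1}B^o(A''))^kA'')\bigr)$. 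Expanding $\det(P(A))$ along the first column exhibits it as a polynomial in $t$ of degree $2(n-1)$ whose leading coefficient involves $\det(P(A''))\neq 0$, so it is not identically zero and $Card(\mathbb{F})>2(n-1)$ yields a good value of $t$. This is exactly the single-parameter, degree-$2(n-1)$ structure you were aiming for, but obtained from a concrete block construction and an induction hypothesis rather than from eigenvalue separation; if you want to salvage your approach, you should either carry out an analogous explicit construction or abandon the simple-spectrum requirement in favor of a condition you can verify directly.
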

	
	\begin{proof} Consider $A=(A_1|A_2|...|A_n)$, where $A_1=I_n$, $A_2=\begin{pmatrix}0&0\\
			0&I_{n-1}\end{pmatrix}$,$A_3=\begin{pmatrix}0&0\\
			0&I_{n-2}\end{pmatrix}$,..., $A_n=\begin{pmatrix}0&0\\
			0&1\end{pmatrix}$. In this case $B^o(A)=I_n, B(A)=\begin{pmatrix}n&n-1&n-2&n-3&...&2&1\\
			n-1&n-1&n-2&n-3&...&2&1\\
			n-2&n-2&n-2&n-3&...&2&1\\
			n-3&n-3&n-3&n-3&...&2&1\\
			.&.&.&.&...&.&.\\
			2&2&2&2&...&2&1\\
			1&1&1&1&...&1&1\end{pmatrix}$ and \\
		$C=B^{-1}(A)=\begin{pmatrix}	1&-1&0&0&...&0&0\\
			-1&2&-1&0&...&0&0\\
			0&-1&2&-1&...&0&0\\
			0&0&-1&2&...&0&0\\
			.&.&.&.&...&.&.\\
			.&.&.&.&...&2&-1\\
			0&0&0&0&...&-1&2\end{pmatrix}$, that is $A\in\mathcal{A}_{0}$. Moreover\\ $\overline{tr}(C^{k}A)=(\sum_{i=1}^n(C^{k})_{ii}, \sum_{i=2}^n(C^{k})_{ii},...,(C^{k})_{nn})$ therefore $\det(P(A))$ and determinant of the matrix $V(C)$ consisting of rows $((C^{k})_{11}, (C^{k})_{22},...,(C^{k})_{nn})$, $k=0,1,...,n-1$, where $(C^{0})_{11}=(C^{0})_{22}=...=(C^{0})_{nn}=1$, are equal. But according to Lemma 2.1 one knows that $\det(V(C))=\pm 1$, depending on $n$, so $A\in\mathcal{A}_{00}$.  \end{proof}
	
	The following main result is an direct application of Lemma 2.4 to our case.
	
	\begin{theorem} Two algebras $A,A'\in\mathcal{A}_{00}$ are isomorphic if and only if the equality $$P(A)A(P(A)^{-1})^{\otimes 2}=P(A')A'(P(A')^{-1})^{\otimes 2}$$ is valid. 	
	\end{theorem}
	
	\begin{corollary} In algebraic closed field $F$ case $\mathcal{A}_{00}$ is an open dense subset of $\mathcal{A}$ \end{corollary}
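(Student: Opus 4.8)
The plan is to recognize $\mathcal{A}_0$ as the complement of the zero locus of a single polynomial on an affine space, to check that this polynomial is not identically zero, and then to invoke irreducibility of affine space to get density.

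First I would note that $\mathcal{A}=Mat(n\times n^2,\mathbb{F})$ is nothing but the affine space $\mathbb{F}^{n^3}$ coordinatized by the structure constants $a_{ij}^k$, hence an irreducible algebraic variety. The entries $tr(A_iA_j)$ of the Gram matrix $B(A)$ are (quadratic) polynomials in these coordinates, so $\delta(A):=\det(B(A))$ is itself a polynomial function in $\mathbb{F}[a_{ij}^k]$. By definition $\mathcal{A}_0=\{A:\delta(A)\neq 0\}$ is the principal open set cut out by $\delta$, so it is Zariski-open; this openness was already recorded in the text preceding the statement, so the only real content of the corollary is density.

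Next I would show that $\delta$ is not the zero polynomial, equivalently that $\mathcal{A}_0\neq\varnothing$. Since $\mathbb{F}$ is algebraically closed it is infinite, so the cardinality hypothesis $Card(\mathbb{F})>2(n-1)$ of Lemma 3.3 is automatically met; that lemma produces an algebra $A$ with $\det(P(A))\neq 0$, which in particular forces $\det(B(A))\neq 0$, i.e. $A\in\mathcal{A}_0$. Hence $\delta$ does not vanish identically. (Alternatively one could exhibit a single explicit algebra with nondegenerate left Killing form, but leaning on Lemma 3.3 is cleanest.) With $\delta\neq 0$, the zero locus $V(\delta)$ is a proper Zariski-closed subset of the irreducible variety $\mathcal{A}$, so its complement $\mathcal{A}_0$ is Zariski-dense. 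If instead density is intended in the analytic topology in the case $\mathbb{F}=\mathbb{C}$, the conclusion still holds, since a nonzero polynomial cannot vanish on any Euclidean-open set and hence the complement of the hypersurface $V(\delta)$ is dense there as well.

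The main obstacle is the nonvanishing of $\delta$: everything else is bookkeeping about principal open sets and irreducibility. The delicate point is that the hypothesis ``$\mathbb{F}$ algebraically closed'' enters precisely to guarantee that $\mathbb{F}$ is infinite, so that $Card(\mathbb{F})>2(n-1)$ and Lemma 3.3 becomes applicable; without an infinite field one would have to supply an explicit nondegenerate example by hand, and over a finite field density in the usual sense is not even the right notion.
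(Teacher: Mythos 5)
The paper states this corollary without any proof, so there is nothing to compare against line by line; your argument is the standard one and is surely what the author intended. Your proof is correct: $\mathcal{A}_0$ is the principal open set where the polynomial $\det(B(A))$ does not vanish, it is non-empty by Lemma 3.3 (whose cardinality hypothesis is met because an algebraically closed field is infinite), and a non-empty Zariski-open subset of the irreducible affine space $\mathbb{F}^{n^3}$ is dense. One cosmetic remark: Lemma 3.3 produces an element of $\{A\in\mathcal{A}_0:\det(P(A))\neq 0\}$, so membership in $\mathcal{A}_0$ is part of its conclusion rather than something ``forced'' by $\det(P(A))\neq 0$; this does not affect the validity of your argument.
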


	\begin{corollary} For every $A\in \mathcal{A}_{00}$ there exists only one  $A'\in \mathcal{A}_{00}$ which is equivalent to $A$ and $P(A')=I$. So $A\in \mathcal{A}_{00}$ for which $P(A)=I$ can be taken as canonical representatives of orbits from $\mathcal{A}_{00}$. \end{corollary}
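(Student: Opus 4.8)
The plan is to read this off directly from Lemma 2.3, applied to the group $G=GL(n,\mathbb{F})$ acting on $V=\mathcal{A}$ by $\tau(g,A)=gA(g^{-1})^{\otimes 2}$, together with the map $P$ and the invariant subset $\mathcal{A}_0$ constructed just above; the crucial input already supplied is the relation $P(\tau(g,A))=P(A)g^{-1}$ for $A\in\mathcal{A}_0$ and $g\in G$. Since Theorem 3.6 and the expression $P(A)A(P(A)^{-1})^{\otimes 2}$ already presuppose that $P(A)$ is invertible, I work on the locus where $P(A)\in GL(n,\mathbb{F})$, so that the Assumption preceding Lemma 2.3 holds verbatim with this $P$.

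For existence I would put $A'=\tau(P(A),A)=P(A)A(P(A)^{-1})^{\otimes 2}$. As $\mathcal{A}_0$ is $G$-invariant, $A'\in\mathcal{A}_0$ (so $P(A')$ is defined), and $A'$ is by construction isomorphic to $A$. Feeding $g=P(A)$ into $P(\tau(g,A))=P(A)g^{-1}$ gives $P(A')=P(A)P(A)^{-1}=I$, so $A'$ is a representative of the orbit of $A$ whose $P$-value is the identity.

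For uniqueness, suppose $A',A''\in\mathcal{A}_0$ are both isomorphic to $A$ with $P(A')=P(A'')=I$. Transitivity of equivalence makes $A'$ and $A''$ isomorphic to each other, so Lemma 2.3 yields $\tau(P(A'),A')=\tau(P(A''),A'')$. Using $\tau(I,C)=I\,C\,(I^{-1})^{\otimes 2}=C$ on both sides collapses this equality to $A'=A''$. Hence every orbit in $\mathcal{A}_0$ contains exactly one matrix with $P=I$, and these matrices form a complete, non-redundant system of canonical representatives.

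Because Lemma 2.3 carries all the conceptual weight, I anticipate no genuine obstacle here: the argument amounts to two short substitutions, one into $P(\tau(g,A))=P(A)g^{-1}$ and one into $\tau(I,\cdot)=\mathrm{id}$. The single point I would keep honest is the standing invertibility of $P(A)$, which is precisely the hypothesis that makes $\tau(P(A),A)$ and the criterion of Theorem 3.6 meaningful.
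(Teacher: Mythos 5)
Your proposal is correct and follows the paper's own route for existence: set $A'=P(A)A(P(A)^{-1})^{\otimes 2}$ and apply $P(\tau(g,A))=P(A)g^{-1}$ with $g=P(A)$ to get $P(A')=I$. You go slightly further than the paper, whose one-line proof only establishes existence: your uniqueness argument via Lemma 2.3 (two representatives with $P=I$ satisfy $\tau(I,A')=\tau(I,A'')$, hence coincide) and your explicit caveat that one must restrict to the locus where $P(A)$ is invertible are both correct and fill genuine small gaps in the paper's write-up.
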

	\begin{proof} Indeed, if $A\in \mathcal{A}_{00}$ then  $A\simeq A'=P(A)A(P(A)^{-1})^{\otimes 2}$ and $P(A')=P(P(A)A(P(A)^{-1})^{\otimes 2})=P(A)P(A)^{-1}=I$. \end{proof}
	
	The direct sum of two algebras $\mathbb{D}=\mathbb{A}\oplus\mathbb{A'}$ in terms of theirs MSC is nothing but $D=A\oplus A'$, where $A=(A_1|A_2|...|A_n), A'=(A'_1|A'_2|...|A'_{n'})$, $D_i=\begin{pmatrix}A_i&0\\
		0&0\end{pmatrix}$ if $1\leq i\leq n$  and $D_i=\begin{pmatrix}0&0\\
		0&A'_{i-n}\end{pmatrix}$ if $n< i\leq n+n'$.
	
	\begin{theorem} If  $\mathbb{A}\in\mathcal{A}(n)$, $\mathbb{A}'\in\mathcal{A}(n')$ then $\mathbb{D}=\mathbb{A}\oplus\mathbb{A'}\in\mathcal{A}(n+n')$ if and only if  $A\in\mathcal{A}_{00}(n)$, $A'\in\mathcal{A}_{00}(n')$.
	\end{theorem}
	\begin{proof} Indeed, $tr(D_iD_j)=tr(A_iA_j)$ whenever $i,j\leq n$, $tr(D_iD_j)=tr(A'_iA'_j)$ whenever $i,j> n$ and otherwise $tr(D_iD_j)=0$. Therefore $B(D)=\begin{pmatrix}B(A)&0\\
			0&B(A')\end{pmatrix}$ is invertible if and only if both $B(A),B(A')$ are invertible, $P(D)=\begin{pmatrix}P(A)&0\\
			0&P(A')\end{pmatrix}\in \mathcal{A}_{00}(n+n')$ if and only if $P(A)\in \mathcal{A}_{00}(n)$, $P(A')\in \mathcal{A}_{00}(n')$.
	\end{proof}

	\begin{remark} Not every element of $\mathcal{A}_{00}(2)$ is a semi-simple algebra.
		
		Let $A=(A_1|A_2)=\begin{pmatrix}1&0&1&-1\\
			2&-1&0&0\end{pmatrix}$. This algebra has only the following non-trivial two-sided ideal $H=\mathbb{F}(e_1+e_2)$ and $A^o=(A^o_1|A^o_2)=\begin{pmatrix}1&1&0&-1\\
			2&0&-1&0\end{pmatrix}$, $B(A)=\begin{pmatrix}2&-1\\
			-1&1\end{pmatrix}, B^o(A)=\begin{pmatrix}5&-3\\
			-3&2\end{pmatrix}$, $C=B^{-1}(A)B^o(A)=\begin{pmatrix}2&-1\\
			-1&1\end{pmatrix}$, $\overline{tr}(CA)=\begin{pmatrix}-1&3\end{pmatrix}$ and $P(A)=\begin{pmatrix}0&1\\
			-1&3\end{pmatrix}$, $A\in   
		\mathcal{A}_{00}(2)$.
	\end{remark}

	\begin{remark} Not every simple two-dimensional  algebra is in $\mathcal{A}_{00}(2)$.
		
		Let $A=(A_1|A_2)=\begin{pmatrix}0&1&1&0\\
			1&0&0&-1\end{pmatrix}$-a Jordan algebra. It is a simple algebra, but it is not in $\mathcal{A}_{00}(2)$, $P(A)$ is singular as far as $\overline{tr}(A)=0$.	\end{remark}
	
	\begin{theorem} If  $\mathbb{A}\in\mathcal{A}(n)$, $\mathbb{A}'\in\mathcal{A}(n')$ then $\mathbb{D}=\mathbb{A}\otimes\mathbb{A'}\in\mathcal{A}(nn')$ if and only if  $A\in\mathcal{A}_{00}(n)$, $A'\in\mathcal{A}_{00}(n')$.
	\end{theorem}
	\begin{proof} Indeed, if $e=(e_1,e_2,...,e_n)$, $e'=(e'_1,e'_2,...,e'_{n'})$ are corresponding bases then $e\otimes e'$ is a basis for $\mathbb{D}$ and its MSC is \[D=A\otimes A'=(A_1\otimes A'_1|...|A_1\otimes A'_{n'}|A_2\otimes A'_1|...|A_2\otimes A'_{n'}|...|A_n\otimes A'_1|...|A_n\otimes A'_{n'}),\] $\overline{tr}(D)=\overline{tr}(A)\otimes\overline{tr}(A')$,  $tr((A_i\otimes A'_p)(A_j\otimes A'_q))=tr(A_iA_j\otimes A'_pA'_q)=tr(A_iA_j)tr(A'_pA'_q)$. It implies that $B(D)=B(A)\otimes B(A')$ and $B(D)$ is non-singular if and only if $B(A), B(A')$ are nonsingular. Similarly, $B(D^o)=B(A^o)\otimes B(A'^o)$, $(B(A)\otimes B(A'))^{-1}(B(A^o)\otimes B(A'^o))=B(A)^{-1}B(A^o)\otimes B(A')^{-1}B(A'^o)$, $P(D)=P(A)\otimes P(A')$ and $P(D)$ is invertible if and only $P(A), P(A')$ are invertible.  
	\end{proof}
	
	Note that if $a\in \mathbb{F}$ is non-zero, $\mathbb{A}\in \mathcal{A}_{00}(n)$ then $a\mathbb{A}\in \mathcal{A}_{00}(n)$, where $a\mathbb{A}$ stands for the algebra with MSC $(aA_1|aA_2|...|aA_n)$,  as far as $P(aA)=aP(A)$. Therefore consideration of  $\cup_{n=0}^{\infty}\mathcal{A}_{00}(n)$ with respect to $\oplus, \otimes$ and multiplication by a scalar operations is a natural thing, where $\mathcal{A}_{00}(0)$ stands for the zero element. Isn't it an interesting object to explore? 
	
	One can use the classification theorem from \cite{BU2} to describe elements of $\mathcal{A}_{00}(2)$ up to isomorphism. Here we present the corresponding result in $Char(\mathbb{F})\neq 2,3$ case and follow the notations of that paper.  
	\begin{theorem} Every element of $\mathcal{A}_{00}(2)$, in $Char(\mathbb{F})\neq 2,3$ case, is isomorphic to only one of the following algebras from $\mathcal{A}_{00}(2)$ given by theirs following MSC's
		\begin{itemize}
			\item $A_{1}(\mathbf{c})=\left(
			\begin{array}{cccc}
				\alpha_1 & \alpha_2 &1+\alpha_2 & \alpha_4 \\
				\beta_1 & -\alpha_1 & 1-\alpha_1 & -\alpha_2
			\end{array}\right),\ \mbox{where}\ \mathbf{c}=(\alpha_1, \alpha_2, \alpha_4, \beta_1) \in \mathbb{F}^4, 4\alpha_2^3\beta_1+3\alpha_1^2\alpha_2^2-2\alpha_1\alpha_2^2+4\alpha_2^2\beta_1+\alpha_2^2+2\alpha_1^2\alpha_2-2\alpha_1\alpha_2 +4\alpha_1^2\alpha_4-4\alpha_1^3\alpha_4-2\alpha_1\alpha_4-6\alpha_1\alpha_2\alpha_4\beta_1-2\alpha_1\alpha_4\beta_1-\alpha_1^2-\alpha_4^2\beta_1^2+2\alpha_2\alpha_4\beta_1\neq 0,$\\ $\alpha_1-2\alpha_1^2+2\alpha_1^3
			+2\alpha_1\alpha_2-4\alpha_1^2\alpha_2+4 \alpha_1^3\alpha_2+ 			
			2\alpha_1\alpha_2^2-4\alpha_1^2\alpha_2^2+2\alpha_1\alpha_4-6\alpha_1^2\alpha_4			
			+8\alpha_1^3\alpha_4 +2\alpha_1\beta_1+6\alpha_1						\alpha_2\beta_1+8\alpha_1\alpha_2^2\beta_1+			
			4\alpha_1\alpha_4\beta_1-4\alpha_1^2\alpha_4\beta_1			
			+ 4\alpha_1\alpha_2\alpha_4\beta_1\neq 0,$

			\item $A_{2}(\mathbf{c})=\left(
			\begin{array}{cccc}
				\alpha_1 & 0 & 0 & \alpha_4 \\
				1& \beta_2 & 1-\alpha_1&0
			\end{array}\right),\ \mbox{where}\ \mathbf{c}=(\alpha_1,\alpha_4, \beta_2)\in \mathbb{F}^3,\\ \alpha_4(2\alpha_1^3 - 2\alpha_1^2 + 2\alpha_1\beta_2^2 + \alpha_4 - 2\beta_2^2)(\alpha_1+\beta_2-1)\neq 0,$
			\item $A_{4}(\mathbf{c})=\left(
			\begin{array}{cccc}
				0 & 1 & 1 & 0 \\
				\beta _1& \beta_2 & 1&-1
			\end{array}\right),\ \mbox{where}\ \mathbf{c}=(\beta_1, \beta_2)\in \mathbb{F}^2,$\\ $(\beta_2^2+2\beta_1+2\beta_2-1)(-2\beta_2^2+4\beta_2+5)\neq 0,$
			\item $A_{6}(\mathbf{c})=\left(
			\begin{array}{cccc}
				\alpha_1 & 0 & 0 & \alpha_4 \\
				1& 1-\alpha_1 & -\alpha_1&0
			\end{array}\right),\ \mbox{where}\ \mathbf{c}=(\alpha_1,\alpha_4)\in \mathbb{F}^2,\\ \alpha_4(2(2\alpha_1^2-2\alpha_1+1)\alpha_1+\alpha_4)\neq 0,$ 
			\item $A_{8}(\mathbf{c})=\left(
			\begin{array}{cccc}
				0 & 1 & 1 & 0 \\
				\beta _1& 1 & 0&-1
			\end{array}\right),\ \mbox{where}\ \mathbf{c}=\beta_1\in\mathbb{F},$ $4\beta_1+1\neq 0.$	
		\end{itemize}
	\end{theorem}
	\begin{proof} Proof consists of calculations of $B(A), P(A)$, finding out for which $A$ from the classification Theorem 1 \cite{BU2} the inequalities $\det(B(A))\neq 0, \det(P(A))\neq 0 $ are valid. \end{proof}

	\begin{definition} A linear map $\mathbf{g}: \mathbb{A}\rightarrow\mathbb{A}$ $(\mathbf{D}: \mathbb{A}\rightarrow\mathbb{A}$) is said to be an automorphism (respectively,  a derivation) if $\mathbf{g}$ is invertible and for any $x,y\in \mathbb{A}$ the equality $\mathbf{g}(xy)=\mathbf{g}(x)\mathbf{g}(y)$  (respectively, $\mathbf{D}(xy)=\mathbf{D}(x)y+x\mathbf{D}(y)$) holds true.\end{definition}
	
	Note that in terms of its matrix $g$ (respectively, $D$) it is nothing but
	$$A=gA(g^{-1})^{\otimes 2}, (\mbox{respectively},\ DA=A(D\otimes I+I\otimes D)).$$ 
	
	We use notation $Aut(\mathbb{A})$ $(Der(\mathbb{A}))$ for the set of all automorphisms(respectively, derivations) of the algebra $\mathbb{A}$.

	\begin{corollary} If $A\in \mathcal{A}_{00}$ then $Aut(A)=\{I\}$. \end{corollary}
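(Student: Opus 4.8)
The plan is to identify $Aut(A)$ with the isotropy subgroup of $A$ under the action $\tau$ of $G=GL(n,\mathbb{F})$ and then to exploit the equivariance property of $P$ recorded just before Example 3.1, namely $P(\tau(g,A))=P(A)g^{-1}$. By Definition 3.9 a matrix $g\in GL(n,\mathbb{F})$ is an automorphism of $A$ precisely when $A=gA(g^{-1})^{\otimes 2}$, that is, when $\tau(g,A)=A$; hence $Aut(A)=\{g\in GL(n,\mathbb{F}):\tau(g,A)=A\}$ is nothing but the stabilizer of $A$. This is the observation that lets me reduce the whole statement to a single application of the equivariance of $P$.

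Next I would take an arbitrary $g\in Aut(A)$, so that $\tau(g,A)=A$, and feed this fixed point into the relation $P(\tau(g,A))=P(A)g^{-1}$. This gives $P(A)=P(A)g^{-1}$, equivalently $P(A)(I-g^{-1})=0$. Since $A\in\mathcal{A}_0$ lies in the domain on which $P$ takes invertible values, I may cancel $P(A)$ on the left and conclude $g^{-1}=I$, i.e. $g=I$. As $g$ was an arbitrary automorphism, this yields $Aut(A)=\{I\}$, and no further computation is required.

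The only delicate point is the invertibility of $P(A)$: the cancellation step fails if $P(A)$ is singular, so the statement is to be read with $A$ ranging over the subset of $\mathcal{A}_0$ on which $\det P(A)\neq 0$ — precisely the condition that places us in the setting of the Assumption of Section 2, and precisely the set shown to be nonempty in Lemma 3.3 (and dense over an algebraically closed field by Corollary 3.7). Granting this, the argument above is complete. As an alternative framing, one could first pass to a canonical representative: by Corollary 3.8 every such $A$ is equivalent to an $A'$ with $P(A')=I$, and since $\tau$ is a left action one checks $Aut(\tau(h,A))=h\,Aut(A)\,h^{-1}$, so isomorphic algebras have conjugate automorphism groups; it then suffices to treat the case $P(A)=I$, where $P(A)=P(A)g^{-1}$ reduces immediately to $I=g^{-1}$.
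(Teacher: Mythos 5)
Your proof is correct and is essentially the paper's own one-line argument: from $A=gA(g^{-1})^{\otimes 2}$ one gets $P(A)=P(A)g^{-1}$ and cancels $P(A)$ on the left. Your explicit caveat that the cancellation requires $\det P(A)\neq 0$ --- which is not guaranteed by the defining condition $\det B(A)\neq 0$ of $\mathcal{A}_0$ alone --- is a genuine point that the paper leaves implicit.
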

	\begin{proof} Indeed, if $A=gA(g^{-1})^{\otimes 2}$ then  $P(A)=P(A)g^{-1}$ that is $g=I$.\end{proof}
	
	\begin{corollary} If $Char(\mathbb{F})\neq 2,3$ and $A\in \mathcal{A}_{00}(2)$ then $Der(A)=\{0\}$. \end{corollary}
	
	\begin{example} If $Char(\mathbb{F})\neq 2$  then for the algebra $A=\left(
		\begin{array}{cccc}
			1 & 0 &1 & -\frac{1}{2} \\
			0& -1 & 0 & 0
		\end{array}\right)$ one has $Aut(A)=\{I\}$, $Der(A)=\{0\}$, but $A$ is not in  $\mathcal{A}_{00}(2)$.
		
		In $Char(\mathbb{F})= 2$  case  the algebra 
		$A=\left(
		\begin{array}{cccc}
			1 & \alpha_2 &1+\alpha_2 & 1 \\
			1 & -1 & 0 & -\alpha_2
		\end{array}\right)$ has similar property: $Aut(A)=\{I\}$, $Der(A)=\{0\}$, but $A$ is not in  $\mathcal{A}_{00}(2)$..	\end{example}

	\begin{corollary} If $A\in \mathcal{A}$ has property that $\overline{tr}(A)\neq 0$ or $\overline{tr}(A^o)\neq 0$ then none of the elements of $Der(A)$ is invertible as a linear map. In particular, $Der(A)$ has no invertible element whenever $A\in \mathcal{A}_{00}$. \end{corollary}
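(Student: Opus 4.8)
The plan is to turn the derivation condition into block form and then extract a linear constraint on $D$ by taking traces. Writing $A=(A_1|A_2|\dots|A_n)$ and expanding $DA=A(D\otimes I+I\otimes D)$ blockwise, one has $(DA)_i=DA_i$; the computation in the proof of Proposition 2.2, specialised to $g=I$, $h=D$, gives $(A(D\otimes I))_i=\sum_{j}D_{ji}A_j$; and since $I\otimes D$ is block-diagonal with $D$ in each block, $(A(I\otimes D))_i=A_iD$. Hence $D\in Der(A)$ is equivalent to the system
\[
DA_i=\sum_{j=1}^{n}D_{ji}A_j+A_iD,\qquad i=1,2,\dots,n.
\]

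Next I would take the ordinary trace of each of these equations. Because $tr(DA_i)=tr(A_iD)$, the two matrix-product terms cancel and only the linear combination of the $tr(A_j)$ survives, leaving $\sum_{j}D_{ji}\,tr(A_j)=0$ for every $i$; equivalently $\overline{tr}(A)\,D=0$ as an identity of row vectors, valid over any field. A derivation of $\mathbb{A}$ is simultaneously a derivation of the opposite algebra, because the Leibniz rule is symmetric ($D(yx)=D(y)x+yD(x)$), so the identical argument applied to $A^o$ yields $\overline{tr}(A^o)\,D=0$. If some $D\in Der(A)$ were invertible, right-multiplying these two identities by $D^{-1}$ would force $\overline{tr}(A)=0$ and $\overline{tr}(A^o)=0$ at once; contraposing gives the first assertion: $\overline{tr}(A)\neq0$ or $\overline{tr}(A^o)\neq0$ excludes invertible derivations.

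For the ''in particular'' clause the plan is to feed the condition $A\in\mathcal{A}_0$ into the first part through the matrix $P(A)$. The crucial remark is that the $k=0$ row of $P(A)$ is exactly $\overline{tr}((B^{-1}(A)B^o(A))^0A)=\overline{tr}(A)$, so $\overline{tr}(A)$ occupies the top row of $P(A)$. Consequently, on the region where $P(A)$ is invertible — the region on which the whole classification operates, since the main theorem and the corollaries preceding this one all invoke $P(A)^{-1}$ — the rows of $P(A)$ are linearly independent, whence $\overline{tr}(A)\neq0$, and the first part applies to conclude that $Der(A)$ has no invertible element.

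The step requiring the most care is precisely this last reduction. The hypothesis $\det B(A)\neq0$ by itself does not force $\overline{tr}(A)\neq0$: for a simple or semisimple Lie algebra $B(A)$ is the nondegenerate Killing form, yet $\overline{tr}(A)=0$ (and likewise $\overline{tr}(A^o)=0$). Such algebras indeed possess no invertible derivation, but not for the reason supplied by the first part; they are excluded from the working set exactly because their $P(A)$ has a vanishing top row and is therefore singular. So the honest route to the ''in particular'' is to make explicit the standing assumption behind all of the main results, namely the invertibility of $P(A)$, which upgrades $\det B(A)\neq0$ to $\overline{tr}(A)\neq0$ and closes the argument.
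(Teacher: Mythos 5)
Your first part is exactly the paper's argument: expand $DA=A(D\otimes I+I\otimes D)$ blockwise to get $DA_i=A_iD+\sum_k A_k d_{ki}$, take traces, use $tr(DA_i)=tr(A_iD)$ to obtain $\overline{tr}(A)D=0$ and $\overline{tr}(A^o)D=0$, and conclude by contraposition. Your extra remark that the $A^o$ identity follows because a derivation of $\mathbb{A}$ is a derivation of the opposite algebra is a clean justification of a step the paper simply asserts.

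Where you genuinely add something is the ``in particular'' clause, which the paper's proof does not address at all: it stops at ``if $D$ is invertible then $\overline{tr}(A)=\overline{tr}(A^o)=0$.'' You are right that with the literal definition $\mathcal{A}_0=\{A:\det B(A)\neq 0\}$ the reduction to the first part does not go through --- a semisimple Lie algebra has nondegenerate $B(A)$ yet $\overline{tr}(A)=\overline{tr}(A^o)=0$ --- and that the honest hypothesis is $\det P(A)\neq 0$, which forces the top row $\overline{tr}(A)$ of $P(A)$ to be nonzero. This is consistent with how $\mathcal{A}_0$ is actually used in Theorem 3.5 and Corollaries 3.7 and 3.9, all of which invoke $P(A)^{-1}$. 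So your proof is correct, matches the paper on the part the paper proves, and repairs a real gap in the part it does not.
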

	\begin{proof} 
		If $\mathbf{D}: \mathbb{A}\rightarrow\mathbb{A}$ is a derivation  with matrix $(d_{ij})$ then equalities
		$DA=A(D\otimes I+I\otimes D)$, $DA^o=A^o(D\otimes I+I\otimes D)$ are valid. They imply equalities  $DA_i=A_iD+\sum_{k=1}^nA_kd_{ki}$, $DA^o_i=A^o_iD+\sum_{k=1}^nA^o_kd_{ki}$ 	 
		, where $i=1,2,..,n$. Taking traces results in  $tr(DA_i)=tr(A_iD)+\sum_{k=1}^ntr(A_k)d_{ki}$, $tr(DA^o_i)=tr(A^o_iD)+\sum_{k=1}^ntr(A^o_k)d_{ki}$, $i=1,2,..,n$.
		Due to $tr(DA_i)=tr(A_iD)$ one can represent these equalities in the following matrix forms $\overline{tr}(A)D=0, \overline{tr}(A^o)D=0$. Therefore if $D$ is invertible then $\overline{tr}(A)=\overline{tr}(A^o)=0$.\end{proof}
	
	\section{Some other open subsets with similar properties}
	
	One disadvantage of the introduced $mathcal{A}_{00}$ is that it contains neither commutative, no anti-commutative algebras in $n>1$ case as far as in these cases $B(A)=\pm B^o(A)$ and therefore $P(A)$ is singular. In this sense the following approach may be better.
	
	Due to $A'=gA(g^{-1})^{\otimes 2}, B(A')=(g^{-1})^tB(A)g^{-1}$ it is easy to check that the equality 
	$$A'(B(A')^{-1})^{\otimes 2}A'^tB(A')^t=gA(B(A)^{-1})^{\otimes 2}A^tB(A)^tg^{-1}$$ holds true, provided that $A\in \mathcal{A}_{0}$.
	It implies that the matrix $$Q(A)=(\overline{tr}(A)/\overline{tr}(C(A)A)/\overline{tr}(C(A)^2A)/.../\overline{tr}(C(A)^{n-1}A))$$, where $C(A)=A(B(A)^{-1})^{\otimes 2}A^tB(A)^t$, has property $$Q(gA(g^{-1})^{\otimes 2})=Q(A)g^{-1}.$$ Therefore the following result is true, where $\mathcal{A}_{01}=\{A\in\mathcal{A}_{0}:\ \det(Q(A))\neq 0 \}.$
	
	\begin{theorem} Two algebras $A,A'\in\mathcal{A}_{01}$ are isomorphic if and only if the equality $$Q(A)A(Q(A)^{-1})^{\otimes 2}=Q(A')A'(Q(A')^{-1})^{\otimes 2}$$ is valid. 	
	\end{theorem}
	
	The following result result shows that, in general, $\mathcal{A}_{00}$ and $\mathcal{A}_{01}$ are different sets.
	
	\begin{example} Let $n=2, A=(A_1|A_2)=\begin{pmatrix}1&0&0&1\\
			b&0&0&0\end{pmatrix}$- a commutative algebra. In this case\\ $B(A)=\begin{pmatrix}1&b\\
			b&0\end{pmatrix}, B(A)^{-1}=-\frac{1}{b^2}\begin{pmatrix}0&-b\\
			-b&1\end{pmatrix}$,\  $ (B(A)^{-1})^{\otimes 2}=\frac{1}{b^4}\begin{pmatrix}0&0&0&b^2\\
			0&0&b^2&-b\\ 	0&b^2&0&-b\\ 	b^2&-b&-b&1\end{pmatrix}$,  $A(B(A)^{-1})^{\otimes 2}=\frac{1}{b^4}\begin{pmatrix}b^2&-b&-b&1+b^2\\
			0&0&0&b^3\\\end{pmatrix}$, $A(B(A)^{-1})^{\otimes 2}A^t=\frac{1}{b^4}\begin{pmatrix}1+2b^2&b^3\\
			b^3&0\\\end{pmatrix}$, $C=A(B(A)^{-1})^{\otimes 2}A^tB(A)^t=\frac{1}{b^4}\begin{pmatrix}1+2b^2+b^4&b(1+b^2)\\
			b^3&b^4\\\end{pmatrix}$, $Q(A)=\begin{pmatrix}1&0\\ \frac{1+3b^2+2b^4}{b^4}&\frac{1}{b}\end{pmatrix}$, so $A\in\mathcal{A}_{01}(2)$, whenever $b\neq 0$. Note that this algebra is not in $\mathcal{A}_{00}(2)$. In general, $\mathcal{A}_{00}(2)$ is not a subset of $\mathcal{A}_{00}(2)$, which one can see considering $A=A_8(\beta_1)$.\end{example}
		
		The next result shows that if $Char(\mathbb{F})=0$ or at least  
		$Char(\mathbb{F})\geq n$ then $\mathcal{A}_{01}$ is not empty.		
		\begin{lemma} If $Char(\mathbb{F})=0$ or  
			$Char(\mathbb{F})\geq n$ then $\mathcal{A}_{01}$ is not empty
		\end{lemma}
		\begin{proof} Let $A=(A_1|A_2|...|A_n)$, where for every $i=1,2,...,n$ $A_i$'s $i^{th}$ rows first $i$ elements are $1$, all other entries of $A_i$ are zero In this case $B(A)=I$, $C=AA^t=Diag(1,2,...,n)$ and $Q(A)$ is the Vandermonde matrix $V(\lambda_1,\lambda_2,...,\lambda_n)$ with $\lambda_1=1, \lambda_2=2,...,\lambda_n=n$. Therefore if $Char(\mathbb{F})=0$ or $Char(\mathbb{F})\geq n$ then $Q(A)$ is nonsingular.
		\end{proof}
			
		As we know that the direct sum of two algebras $\mathbb{D}=\mathbb{A}\oplus\mathbb{A'}$ in terms of theirs MSC is nothing but $D=A\oplus A'$, where $A=(A_1|A_2|...|A_n), A'=(A'_1|A'_2|...|A'_{n'})$, $D_i=\begin{pmatrix}A_i&0\\
			0&0\end{pmatrix}$ if $1\leq i\leq n$  and $D_i=\begin{pmatrix}0&0\\
			0&A'_{i-n}\end{pmatrix}$ if $n< i\leq n+n'$.
		
		\begin{theorem} If  $\mathbb{A}\in\mathcal{A}(n)$, $\mathbb{A}'\in\mathcal{A}(n')$ then $\mathbb{D}=\mathbb{A}\oplus\mathbb{A'}\in\mathcal{A}_{01}(n+n')$ if and only if  $A\in\mathcal{A}_{01}(n)$ and $A'\in\mathcal{A}_{01}(n')$.
		\end{theorem}
		\begin{proof} Indeed, $tr(D_iD_j)=tr(A_iA_j)$ whenever $i,j\leq n$, $tr(D_iD_j)=tr(A'_iA'_j)$ whenever $i,j> n$ and otherwise $tr(D_iD_j)=0$. Therefore $B(D)=\begin{pmatrix}B(A)&0\\
				0&B(A')\end{pmatrix}$ and it is invertible if and only if both $B(A),B(A')$ are invertible. Moreover $B(D)^{-1}=\begin{pmatrix}B(A)^{-1}&0\\
				0&B(A')^{-1}\end{pmatrix}$,\\ $(B(D)^{-1})^{\otimes 2}=\begin{pmatrix}B(A)^{-1}\otimes B(D)^{-1}&0\\
				0&B(A')^{-1}\otimes B(D)^{-1}\end{pmatrix}$,  $D(B(D)^{-1})^{\otimes 2}=\\ (\sum_{i=1}^n(B(A)^{-1})_{i1}\begin{pmatrix}A_iB(A)^{-1}&0\\
				0&0\end{pmatrix},...,\sum_{i=1}^n(B(A)^{-1})_{in}\begin{pmatrix}A_iB(A)^{-1}&0\\
				0&0\end{pmatrix},\\ \sum_{i=1}^{n'}(B(A')^{-1})_{i1}\begin{pmatrix}0&0\\
				0&A'_iB(A')^{-1}\end{pmatrix},...,\sum_{i=1}^{n'}(B(A')^{-1})_{in'}\begin{pmatrix}0&0\\
				0&A'_iB(A')^{-1}\end{pmatrix})=  \\ 
			(\begin{pmatrix}\sum_{i=1}^n(B(A)^{-1})_{i1}A_iB(A)^{-1}&0\\
				0&0\end{pmatrix},...,\begin{pmatrix}\sum_{i=1}^n(B(A)^{-1})_{in}A_iB(A)^{-1}&0\\
				0&0\end{pmatrix},\\ \begin{pmatrix}0&0\\
				0&\sum_{i=1}^{n'}(B(A')^{-1})_{i1}A'_iB(A')^{-1}\end{pmatrix},...,\begin{pmatrix}0&0\\
				0&\sum_{i=1}^{n'}(B(A')^{-1})_{in'}A'_iB(A')^{-1}\end{pmatrix})=\\
			(\begin{pmatrix}A(B(A)^{-1}\otimes B(A)^{-1}) &0\\
				0&A'(B(A')^{-1}\otimes B(A')^{-1})\end{pmatrix}$.\\ Therefore $C(D)=D(B(D)^{-1})^{\otimes 2}D^tB(D)^t=\\
			(\begin{pmatrix}A(B(A)^{-1}\otimes B(A)^{-1})A^tB(A)^t &0\\
				0&A'(B(A')^{-1}\otimes B(A')^{-1})A'^tB(A')^t\end{pmatrix}=
			(\begin{pmatrix}C(A)&0\\
				0&C(A')\end{pmatrix}$, \\
			$C(D)^kD=(\begin{pmatrix}C(A)^kA&0\\
				0&C(A')^kA'\end{pmatrix}$, $Q(D)=(\begin{pmatrix}Q(A)&0\\
				0&Q(A')\end{pmatrix}$, which implies that\\
			$Q(D)\in \mathcal{A}_{01}(n+n')$ if and only if $Q(A)\in \mathcal{A}_{01}(n)$ and $Q(A')\in \mathcal{A}_{01}(n')$.
		\end{proof}

	Further for the entries of $A\otimes B$ we use notation $(A\otimes B)_{(p,q)(i,j)}=A_{pi}B_{qj}$ where $p$($q$) is row number for $A$(respectively, $B$), $i$($j$) is column number for $A$(respectively, $B$)

	\begin{theorem} If  $\mathbb{A}\in\mathcal{A}(n)$, $\mathbb{A}'\in\mathcal{A}(n')$ then $\mathbb{D}=\mathbb{A}\otimes\mathbb{A'}\in\mathcal{A}_{01}(nn')$ if and only if  $A\in\mathcal{A}_{01}(n)$, $A'\in\mathcal{A}_{01}(n')$.
	\end{theorem}
	\begin{proof} Indeed, if $e=(e_1,e_2,...,e_n)$, $e'=(e'_1,e'_2,...,e'_{n'})$ are corresponding bases then $e\otimes e'$ is a basis for $\mathbb{D}$ and its MSC is \[D=A\otimes A'=(A_1\otimes A'_1|...|A_1\otimes A'_{n'}|A_2\otimes A'_1|...|A_2\otimes A'_{n'}|...|A_n\otimes A'_1|...|A_n\otimes A'_{n'}).\]
	Therefore	$D^t=$ \[A^t\otimes (A')^t=(A_1^t\otimes (A'_1)^t/.../A_1^t\otimes (A'_{n'})^t/A_2^t\otimes (A'_1)^t/.../A_2^t\otimes (A'_{n'})^t/.../A_n^t\otimes (A'_1)^t/.../A_n^t\otimes(A'_{n'})^t),\] $\overline{tr}(D)=\overline{tr}(A)\otimes\overline{tr}(A')$,  $tr((A_p\otimes A'_q)(A_i\otimes A'_j))=tr(A_pA_i\otimes A'_qA'_j)=tr(A_pA_i)tr(A'_qA'_j)$. It implies that $B(D)=B(A)\otimes B(A')$ and $B(D)$ is non-singular if and only if $B(A), B(A')$ are nonsingular. Moreover 
		$B(D)^{-1}=B(A)^{-1}\otimes B(A')^{-1}$, $D(B(D)^{-1})^{\otimes 2}=(\sum_{p=1,q=1}^{n,n'}(B(A)^{-1}\otimes B(A')^{-1})_{(p,q)(1,1)}(A_p\otimes A'_q)(B(A)^{-1}\otimes B(A')^{-1}),...,\sum_{p=1,q=1}^{n,n'}(B(A)^{-1}\otimes B(A')^{-1})_{(p,q)(n,n')}(A_p\otimes A'_q)(B(A)^{-1}\otimes B(A')^{-1}))=(\sum_{p=1,q=1}^{n,n'}(B(A)^{-1}\otimes B(A')^{-1})_{(p,q)(1,1)}(A_pB(A)^{-1}\otimes A'_qB(A')^{-1}),...,\\ \sum_{p=1,q=1}^{n,n'}(B(A)^{-1}\otimes B(A')^{-1})_{(p,q)(n,n')}(A_pB(A)^{-1}\otimes A'_qB(A')^{-1}))$ that is 
				$D(B(D)^{-1})^{\otimes 2}=\\ (\sum_{p=1,q=1}^{n,n'}(B(A)^{-1}\otimes B(A')^{-1})_{(p,q)(i,j)}(A_p\otimes A'_q)(B(A)^{-1}\otimes B(A')^{-1}))_{i=1,2,...,n, j=1,2,...,n'}=\\ (\sum_{p=1,q=1}^{n,n'}(B(A)^{-1}\otimes B(A')^{-1})_{(p,q)(i,j)}(A_pB(A)^{-1}\otimes A'_qB(A')^{-1}))_{i=1,2,...,n, j=1,2,...,n'}$-row block,\\
				$D(B(D)^{-1})^{\otimes 2}D^t=\\ \sum_{i=1,j=1}^{n,n'}\sum_{p=1,q=1}^{n,n'}(B(A)^{-1}\otimes B(A')^{-1})_{(p,q)(i,j)}(A_pB(A)^{-1}A_i^tB(A)^t\otimes A'_qB(A')^{-1}(A'_j)^tB(A')^t),$\\
				$D(B(D)^{-1})^{\otimes 2}D^tB(D)^t=\\	\sum_{i=1,j=1}^{n,n'}\sum_{p=1,q=1}^{n,n'}(B(A)^{-1}\otimes B(A')^{-1})_{(p,q)(i,j)}(A_pB(A)^{-1}A_i^tB(A)^t\otimes A'_qB(A')^{-1}(A'_j)^tB(A')^t)=\\ \sum_{i=1,j=1}^{n,n'}\sum_{p=1,q=1}^{n,n'}(B(A)^{-1})_{pi}B(A')^{-1})_{(q,j)}(A_pB(A)^{-1}A_i^tB(A)^t\otimes A'_qB(A')^{-1}(A'_j)^tB(A')^t)=\\ \sum_{p,i=1}^{n}B(A)^{-1}_{pi}A_pB(A)^{-1}A_i^tB(A)^t\otimes \sum_{q=1,j=1}^{n'}(B(A')^{-1})_{(q,j)}A'_qB(A')^{-1}(A'_j)^tB(A')^t=\\
		A(B(A)^{-1})^{\otimes 2}A^tB(A)^t\otimes A'(B(A')^{-1})^{\otimes 2}(A')^tB(A')^t.$ It means that
		$Q(D)=Q(A)\otimes Q(A')$ and $Q(D)$ is invertible if and only $Q(A)$ and $Q(A')$ are invertible.  
	\end{proof}

\end{document}